\def\BibTeX{{\rm B\kern-.05em{\sc i\kern-.025em b}\kern-.08em
    T\kern-.1667em\lower.7ex\hbox{E}\kern-.125emX}}
\newtheorem{lemma}{Lemma}
\newtheorem{definition}{Definition}
\newtheorem{assumption}{Assumption}
\newtheorem{proposition}{Proposition}
\newtheorem{theorem}{Theorem}
\newtheorem{remark}{Remark}
\begin{document}

\title{Novel Multi-objective Switched Model Predictive Control with Feasibility and Stability Guarantees}

\author{Elias Niepötter, Adrian Grimm, and Torbjørn Cunis
\thanks{This work was not supported by any organization.}
\thanks{The authors are with the Institute of Flight Mechanics and Controls, University of Stuttgart, 70569 Stuttgart.
{\tt\small \{ adrian.grimm | tcunis \}@ifr.uni-stuttgart.de}}%
}


\maketitle
\renewcommand{\thefootnote}{\fnsymbol{footnote}}

\begin{abstract}
As the relevance of control systems capable of dealing with multiple objectives rises (e.g. being economic while maintaining a certain performance), multi-objective Switched Model Predictive Control combines all the advantages of Model Predictive Control while dealing with multiple objectives. We propose two novel frameworks, a nominal and a robust framework to guarantee recursive feasibility of each Model Predictive Controller under arbitrary switching and assure asymptotic stability of the closed-loop system applying the nominal framework and Input-to-State stability using the robust framework. The presented frameworks employ methods from switched systems, enabling the utilization of a supervisor control instance which allows for complex objectives and multi-objective control. Our numerical example confirms the superior performance of our proposed frameworks compared to a standard Model Predictive Control approach.
\end{abstract}



\section{Introduction}
Model Predictive Control (MPC) is one of the few control techniques capable of handling nonlinearities, multivariable systems, and state and input constraints combined in a rigorous mathematical framework. Switched Systems, a class of hybrid systems, are of increasing relevance due to the need for control systems capable of handling increasingly complex systems. However, switching between different MPCs is currently not extensively researched, although there are several advantages of such a method. This work proposes a parallel-Switched Model Predictive Control (pSMPC) scheme, which allows for multi-objective optimization, a complex high-level cost function, and mode decoupling with constraints. The increasing relevance of control systems handling multiple objectives at the same time is shown in \cite{peitz_mooc}. Here, the attribute parallel means that all MPCs are solved in parallel and all solutions are available at each time step. This approach has not been covered in the literature so far. The scheme consists of multiple MPC running in parallel with additional constraints to guarantee recursive feasibility under arbitrary switching signals. We present both a nominal and robust framework. A \textit{supervisor} coordinates the switching and decides which control input is forwarded to the plant based on the information generated by each MPC. The supervisor incorporates switching constraints and a high-level objective function.

\subsection{Literature Review}
The works in \cite{franco_practically, franco_multiple, parisini_hybrid_rhc}
investigates switching between different prediction models and controllers in a hybrid system setup, but focuses on approximated receding horizon controllers through neural networks. The authors do not address closed-loop stability or recursive feasibility, e.g., through stabilizing terminal ingredients. Multiple Model Predictive Control (MMPC) \cite{du_mmpc, kuure_mmpc, yang_mmpc} utilizes multiple linear prediction models to control a nonlinear plant. The consequences of switching frequently back and forth between different controllers, such as potential destabilization (see e.g. \cite{liberzon_switched}), are not studied. Instead, these works focus on selecting a suitable model for a specific operating region. Soft switching MPC techniques like \cite{mehdizadeh_soft, hao_soft} employ multiple linear prediction models for a nonlinear system and switch 'softly' between the controllers to deal with nonlinearities. This approach extends the MMPC framework in such a way that the objective functions of the MPCs are combined through a convex combination before and after switching to ensure a smooth transition. However, it focuses on linear MPC only and no guarantees for recursive feasibility of the presented algorithms are given. Works on switched MPC for switched systems \cite{yuan_smpc_ssys, zhang_smpc_ssys, zhuang_smpc_ssys} focus on switched systems as a plant model. These works present stability and feasibility characterizations, but are mainly based on so-called {\em dwell time} arguments which might be suboptimal regarding performance.

\subsection{Contribution}
The main contribution of this work is the derivation of two novel multi-objective MPC frameworks with rigorous stability and feasibility guarantees. To some extent, the proposed methods resemble a combination of switched systems and quasi-infinite horizon MPC with terminal ingredients. We employ methods from switched systems to design a multi-objective MPC framework. To the best of the authors' knowledge, no existing method in the literature currently provides such a framework in combination with stability and feasibility guarantees. We propose stabilizing constraints under arbitrary switching signals and provide constructive algorithms to apply those constraints in a practical manner. In addition to a nominal framework, a robust framework is proposed which guarantees Input-to-State stability (ISS) in the presence of large disturbances. We prove stability and recursive feasibility for both frameworks, filling a gap in the literature. 

The remainder of the paper is organized as follows:
Section~\ref{sec:prelim} presents the control problem as well as the relevant background of the utilized methodologies. The need for new methods and the novel MPC schemes are derived in Section~\ref{sec:main}. A numerical example demonstrates the additional advantages of the proposed methods in Section~\ref{sec:numex}.


\section{Preliminaries} \label{sec:prelim}
\textbf{Notation.} The energy norm of a vector $\boldsymbol{x} \in \mathbb R^n$ w.r.t. to a matrix $\boldsymbol{A} \in \mathbb R^{n \times n}$ is indicated by $||\boldsymbol{x}||_{A} = \sqrt{\boldsymbol{x}^T\boldsymbol{A}\boldsymbol{x}}$. The Euclidean norm is denoted by $||\cdot||_2$ and $||\cdot||_\infty$ denotes the infinity norm of a signal. The absolute value of a scalar is indicated by $|\cdot|$. The notation $\boldsymbol{A}_{[a,b]}$ indicates that from the matrix $\boldsymbol{A}$ the columns $a$ to $b$ are selected. A composition of function $f$ and $g$ is denoted by $f \circ g$. We use the comparison function classes $\mathcal K$, $\mathcal K_\infty$, and $\mathcal {KL}$ in this work, see \cite{kellet_ccf} for details.

\subsection{Control Structure}
This work utilizes a control structure inspired by \cite{parisini_hybrid_rhc}.
The plant dynamics and the discrete prediction models are given by
\begin{align}
    \dot{\boldsymbol{\xi}}(t) &= F(\boldsymbol{\xi}(t),\boldsymbol{\mu}(t)) + \boldsymbol{\nu}(t) \label{eq:plant} \\
    \boldsymbol{x}^{i}(k+1) &= f_{i}(\boldsymbol{x}^{i}(k),\boldsymbol{u}^{i}(k)) \label{eq:predictionModel}
\end{align}
where $\boldsymbol{\xi}(t) \in \mathcal{X} \subseteq \mathbb{R}^{n}$ is the state vector, 
$\boldsymbol{\mu}(t) \in \mathcal{U} \subseteq \mathbb{R}^{m}$ is the input vector and
$\boldsymbol{\nu}(t) \in \mathcal{D} \subseteq \mathbb{R}^{n}$ is some disturbance vector. It is assumed that the states are measurable.
Each MPC is formulated in discrete time with prediction models $f_i$, where the index $i \in \mathcal{I} = \left[ 1 \dots M_c \right]$ indicates a specific prediction model. The state and input vectors are $\boldsymbol{x}^i \in \mathcal{X}_i \subseteq \mathbb{R}^{n_i}$ and $\boldsymbol{u}^i \in \mathcal{U}_i \subseteq \mathbb{R}^{m}$ where $\boldsymbol{x}^{i}$ and $\boldsymbol{u}^{i}$ indicate the state and input vector of the $i$-th MPC and $\mathcal{X}_i$ and $\mathcal{U}_i$ are compact for all $i \in \mathcal I$. The index set $\mathcal{I}$ is minimal, that is, no two controllers $(i,j) \in \mathcal{I}$ with $i \neq j$ are identical. A \textit{Translator} connects the continuous-time plant to the discrete-time MPCs and the \textit{Supervisor}. In this work, the Translator samples the state signal $\boldsymbol{\xi}(\cdot)$ at times $t_k$, $k \in \mathbb N$, and maps it to the prediction states via $\boldsymbol{x}^i(k) = T^i(\boldsymbol{\xi}(t_k))$. The Supervisor receives the current state of the plant and a set of information from each MPC, namely the optimal input and state trajectories, as well as the optimal value $V_i$. The supervisor decides which control input to feed-forward to the plant based on the super objective $\mathcal{J}$. The control structure is shown in Fig.~\ref{fig:control_structure}.

\begin{figure}[h]
    \begin{center}
        \includegraphics[width=1\linewidth]{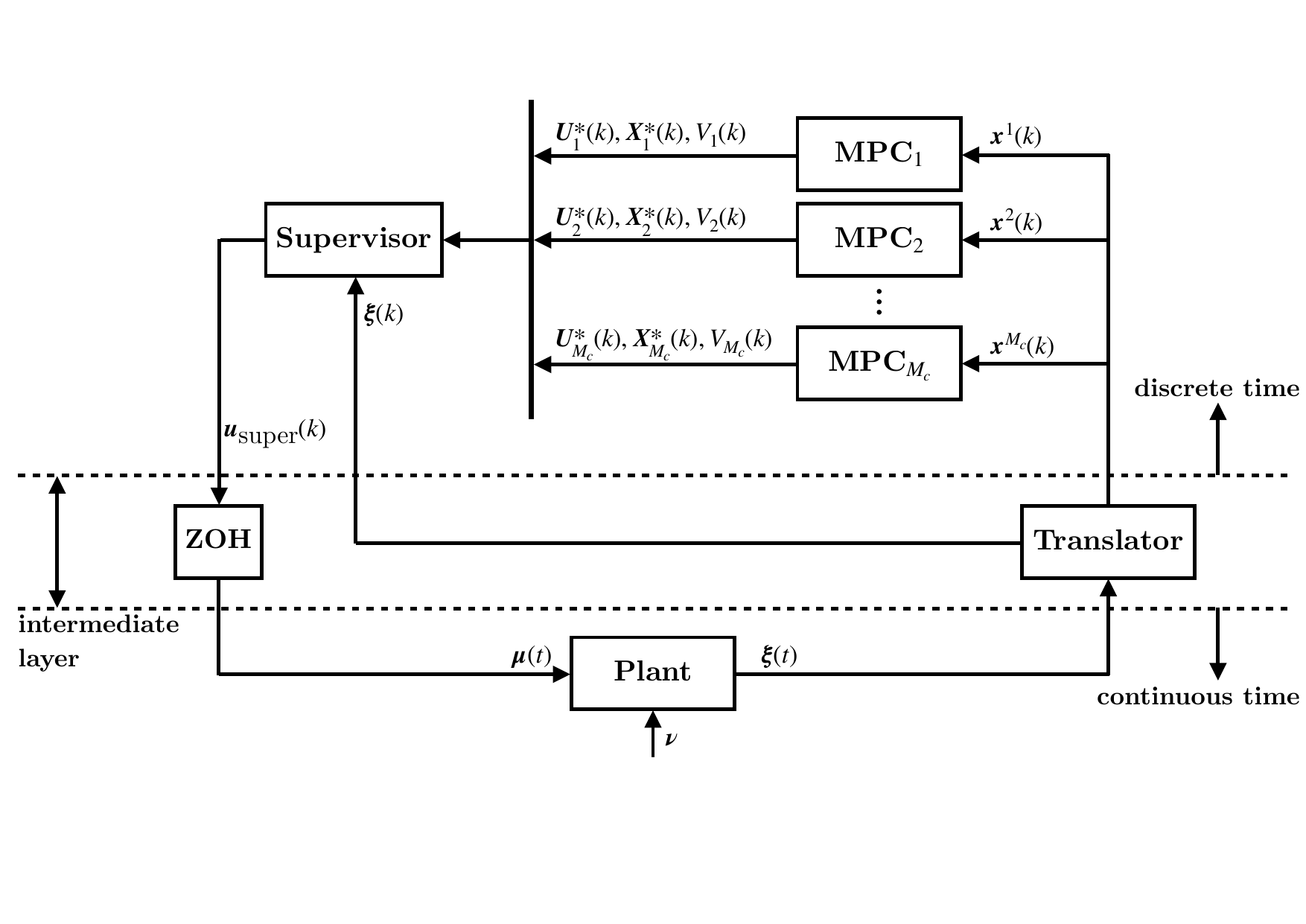}
        \vspace{-2em}\caption{Overview of the control structure inspired by \cite{parisini_hybrid_rhc}}
        \label{fig:control_structure}
    \end{center}
\end{figure}

\begin{assumption}
    \label{asm:sampledDataSystem}
    The discrete-time predicted signals satisfy $\boldsymbol{x}^i(k+1) = T^i(\boldsymbol{\xi}(t_{k+1}))$ under ZOH and sample times ${t_k}$, ${k \in N}$.
\end{assumption}
\begin{assumption}
\label{asm:regularity}
    The dynamics $F$ are Lipschitz continuous in their first argument and continuous in the second. The dynamics $f_i$ are continuously differentiable over $\mathcal{X}$ and $\mathcal{U}$.
\end{assumption}
\begin{assumption}
\label{asm:boundedNorm}
    There exist class $\mathcal{K}_{\infty}$ function $\overline{\alpha}$ s.t.
    \begin{equation}
        \sum\limits_{i = 1}^{M_c} ||\boldsymbol{x}^{i}(k)||_2 \geq \overline{\alpha}(||\boldsymbol{\xi}(t_k)||_2)
    \end{equation}
    holds for all sample times $t_k$, $k \in \mathbb{N}$.
\end{assumption}
Assumptions \ref{asm:sampledDataSystem} and \ref{asm:regularity} are standard for sampled-data systems and Model Predictive Control. Assumption \ref{asm:boundedNorm} ensures regularity of the mappings $T^{i}$.

\subsection{Nominal and Robust Model Predictive Control}
The nominal MPC scheme is the discrete Quasi-Infinite Horizon MPC from \cite{rajhans}. Let $k$ be the current discrete time instance and $\boldsymbol{x}^{i}(k)$ the state of the $i$-th subsystem at time instance $k$. Let $\boldsymbol{x}^{i}(j \mid k)$ and $\boldsymbol{u}^{i}(j \mid k)$ denote the predicted state and input at time instance $j$ where $j$ is the time step of the prediction. Let $\boldsymbol{U}_i(k) = \left[ \boldsymbol{u}^{i}(k \mid k), \dots, \boldsymbol{u}^{i}(k+N-1 \mid k) \right]$ denote a candidate input sequence for the interval $\left[ k, k+N \right]$. Then the $i$-th optimal control problem (OCP) is given by
\begin{equation}
    \begin{aligned}
        \label{eq:ocp}
        V_i(k) = & \min_{\boldsymbol{U}_i(k)} \quad J^{i}(\boldsymbol{x}^{i}(k), \boldsymbol{U}_i(k))\\
        \text { s.t. } \quad &\boldsymbol{x}^{i}(j+1 \mid k) = f_i(\boldsymbol{x}^{i}(j \mid k),\boldsymbol{u}^{i}(j \mid k)) \\
        &\boldsymbol{x}^{i}(j \mid k) \in \mathcal{X}_i \text{ , } \boldsymbol{u}^{i}(j \mid k) \in \mathcal{U}_i \\
        &\boldsymbol{x}^{i}(k+N \mid k) \in \mathcal{X}^{N}_{i} \text{ , } \boldsymbol{x}^{i}(k \mid k) = \boldsymbol{x}^{i}(k)
    \end{aligned}
\end{equation}
for all $j \in [k, N-1]$,
where the cost functional is given by
\begin{equation}
\begin{aligned}
    J^{i}(\boldsymbol{x}^{i}(k), & \boldsymbol{U}_i(k)) = ||\boldsymbol{x}^{i}(k+N \mid k)||^2_{\boldsymbol{P}_i} \\
    & + \sum_{j=k}^{k+N-1} ||\boldsymbol{x}^{i}(j \mid k)||^2_{\boldsymbol{Q}_i} + ||\boldsymbol{u}^{i}(j \mid k)||^2_{\boldsymbol{R}_i}.
\end{aligned}
\end{equation}
The weighting matrices $\boldsymbol{Q}_i$ and $\boldsymbol{R}_i$ are positive definite (p.d.) for all $i$ and the length of the prediction horizon is given by $N$. The sets $\mathcal{X}_i$, $\mathcal{U}_i$, and $\mathcal{X}^{N}_{i}$ are polytopes. Let $\boldsymbol{U}^{*}_i(k) = \left[ \boldsymbol{u}^{*i}(k \mid k), \dots, \boldsymbol{u}^{*i}(k+N-1 \mid k) \right]$ denote the optimal solution of \eqref{eq:ocp} at $k$. The first optimal input is applied to the plant via zero-order hold, i.e., 
\begin{equation}
\label{eq:rhc}
\boldsymbol{\mu}(t) = \boldsymbol{u}^{*i}(k \mid k), \quad t \in (t_k, t_{k+1}].
\end{equation}
Equivalently, $\boldsymbol{X}^{*}_i(k) = \left[ \boldsymbol{x}^{i}(k \mid k), \dots, \boldsymbol{x}^{*i}(k+N \mid k) \right]$ denotes the optimal state trajectory at time instance $k$. The terminal ingredients $\mathcal{X}^{N}_{i}$, $\boldsymbol{P}_i$ and the terminal linear controller $\boldsymbol{K}_i$ are designed so that the requirements of the QIH-MPC scheme are met. Then, system \eqref{eq:plant} with $\boldsymbol{\nu}(t)=0$ implementing \eqref{eq:rhc} and OCP \eqref{eq:ocp}
is asymptotically stable. The optimal value function $V_i(k)$ is a Lyapunov function. This is proven, e.g. in \cite{rajhans}. Continuing with the preliminaries for the robust MPC, the following ISS statements are notation-wise adopted from \cite{jiang_iss, kohler_robust}. Some nonlinear closed-loop prediction dynamics with disturbance
\begin{equation}
    \label{eq:issSystem}
    \tilde{\boldsymbol{x}}(k+1) = z(\tilde{\boldsymbol{x}}(k),\boldsymbol{\nu}(k)) 
\end{equation}
are ISS if there exists a class $\mathcal{KL}$ function $\beta$ and class $\mathcal{K}$ function $\gamma$ satisfying
\begin{equation}
    \label{eq:iss}
    ||\tilde{\boldsymbol{x}}(k,\tilde{\boldsymbol{x}}_0,\boldsymbol{\nu})||_2 \leq \beta(||\tilde{\boldsymbol{x}}_0||_2,k) + \gamma(||\nu||_\infty)
\end{equation}
which holds for all $\nu \in \mathbb{R}^n$ and all initial conditions $\tilde{\boldsymbol{x}}_0 \in \mathbb{R}^n$ where $\tilde{\boldsymbol{x}}(k,\tilde{\boldsymbol{x}}_0,\boldsymbol{\nu})$ denotes the trajectory of $\tilde{\boldsymbol{x}}(k)$ starting at $\tilde{\boldsymbol{x}}_0$ with disturbance $\boldsymbol{\nu}$. Note that the disturbance is not predicted. A system which is ISS satisfies
\begin{equation}
\label{eq:ag}
\varlimsup_{k \rightarrow+\infty}||\tilde{\boldsymbol{x}}\left(k, \tilde{\boldsymbol{x}}_{0}, \boldsymbol{\nu}\right)||_2 \leq \gamma\left(\|\boldsymbol{\nu}\|_{\infty}\right) \quad \forall \tilde{\boldsymbol{x}}_{0}, \boldsymbol{\nu}(\cdot)
\end{equation}
for some $\gamma \in \mathcal{K}_{\infty}$, which is also called the \textit{asymptotic gain property}
(\textit{AG}). A continuous function $V$ is called an ISS-Lyapunov function if there exist class $\mathcal K_\infty$ functions $\alpha_1, \alpha_2, \alpha_3, \gamma$ such that
\begin{align}
    \alpha_1(||\tilde{\boldsymbol{x}}||_2) &\leq V(\tilde{\boldsymbol{x}}) \leq \alpha_2(||\tilde{\boldsymbol{x}}||_2) \label{eq:issV1}\\
    V(f(\tilde{\boldsymbol{x}}, \boldsymbol{\nu}))-V(\tilde{\boldsymbol{x}}) &\leq -\alpha_3(||\tilde{\boldsymbol{x}}||_2)+\gamma(||\boldsymbol{\nu}||_2), \label{eq:issV2}
\end{align}
holds for all $\tilde{\boldsymbol{x}} ,\nu \in \mathbb{R}^n$.
\begin{assumption}
    \label{asm:openloopAS}
    The plant $F$ is open-loop asymptotically stable.
\end{assumption}
Assumption \ref{asm:openloopAS} is made in \cite{kohler_robust} for the derivation of the robust stability properties of the following MPC scheme. In this work, this Assumption is only made for the robust framework. The robust MPC scheme proposed by \cite{kohler_robust} is given by the OCP
\begin{equation}
    \begin{aligned}
        \label{eq:rocp}
        \overline{V}_i(k) = &\min _{\boldsymbol{U}_i(k)} J^{i}(\boldsymbol{\overline{x}}^{i}(k), \boldsymbol{U}_i(k)) + \lambda_i V_{\delta,i}(\boldsymbol{x}^{i}(k),\boldsymbol{\overline{x}}^{i}(k)) \\
        \text { s.t. } \quad & \boldsymbol{x}^{i}(j+1 \mid k)= f_i(\boldsymbol{x}^{i}(j \mid k),\boldsymbol{u}^{i}(j \mid k))\\
        & \boldsymbol{x}^{i}(j \mid k) \in \mathcal{X}_i \text{ , } \boldsymbol{u}^{i}(j \mid k) \in \mathcal{U}_i \\
        & \boldsymbol{x}^{i}(k+N \mid k) \in \mathcal{X}^{N}_{i} \text{ , } \boldsymbol{x}^{i}(k \mid k) = \boldsymbol{\overline{x}}^{i}(k).
    \end{aligned}
\end{equation}
The main difference to \eqref{eq:ocp} is that the measurement $\boldsymbol{x}^{i}(k)$ of the plant is a hard constraint on the initial state in \eqref{eq:ocp} and a soft constraint with a penalty term $\lambda_i V_{\delta,i}(\boldsymbol{x}^{i}(k),\boldsymbol{\overline{x}}^{i}(k))$ in \eqref{eq:rocp}. Thus, the resulting OCP \eqref{eq:rocp} resembles a projection of \eqref{eq:ocp} onto its set of feasible initial conditions and is therefore always feasible. Assuming again the existence of terminal ingredients, system \eqref{eq:plant} implementing \eqref{eq:rhc} and OCP \eqref{eq:rocp} is ISS. The optimal value function $\overline{V}_i(k)$ is an ISS-Lyapunov function. The proof can be found in \cite{kohler_robust}.

\subsection{Switched Systems Stability}
Let $\Xi_i \coloneqq \left\lbrace \tilde{t}^{i}_0, \tilde{t}^{i}_1, \dots, \tilde{t}^{i}_{2r}, \tilde{t}^{i}_{2r+1}, \dots,  \right\rbrace$
be a sequence of switching instances $\tilde{t}^{i} \in \mathbb R$. By convention, even indices will denote an activation of subsystem and odd indices a deactivation. The switched system is described by
\begin{equation}
\label{eq:switchedSystem}
\dot{\boldsymbol{x}}^{\sigma}(t)=F_{\sigma(t)}(\boldsymbol{x}^{\sigma}(t)) + \boldsymbol{\nu}(t)
\end{equation}
where $\sigma: \mathbb R \to \mathcal I$ is the switching signal mapping and $\boldsymbol{\nu}$ is a potential disturbance.\cite{liberzon_switched}

\begin{lemma}
\label{lm:MLF}
\textit{Let $V_i$ denote a Lyapunov function corresponding to the $i$-th system of the family \eqref{eq:switchedSystem} where $\boldsymbol{\nu} = 0$. Assume that there exist p.d. functions
$W_i: \mathbb R^{n_i} \to \mathbb R$ for all $i \in \mathcal{I}$. The switched system \eqref{eq:switchedSystem} is globally asymptotically stable if
\begin{equation}
    \label{eq:MLF}
    V_i\left(\boldsymbol{x}^{i}\left(\tilde{t}^{i}_{2r+2}\right)\right) - V_i\left(\boldsymbol{x}^{i}\left(\tilde{t}^{i}_{2r}\right)\right)
    \leq -W_i\left(\boldsymbol{x}^{i}\left(\tilde{t}^{i}_{2r}\right)\right)
\end{equation}
for all $i \in \mathcal{I}$.}
\end{lemma}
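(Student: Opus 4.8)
\emph{Proof sketch.} The plan is to adapt the classical Multiple Lyapunov Functions argument (cf.\ \cite{liberzon_switched}) to the re-activation inequality \eqref{eq:MLF}, which compares $V_i$ only at \emph{consecutive activation instants} $\tilde{t}^{i}_{2r}$ and $\tilde{t}^{i}_{2r+2}$ of the same subsystem $i$. For each fixed $i \in \mathcal{I}$ I would introduce the sampled sequence $v^{i}_{r} \coloneqq V_i(\boldsymbol{x}^{i}(\tilde{t}^{i}_{2r}))$, $r = 0,1,2,\dots$. Positive definiteness of $V_i$ gives $v^{i}_{r} \geq 0$, while positive definiteness of $W_i$ turns \eqref{eq:MLF} into $v^{i}_{r+1} - v^{i}_{r} \leq -W_i(\boldsymbol{x}^{i}(\tilde{t}^{i}_{2r})) \leq 0$. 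Hence $\{v^{i}_{r}\}_{r}$ is non-increasing and bounded below, and therefore convergent.

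The next step is a telescoping argument. Summing \eqref{eq:MLF} over $r = 0,\dots,R-1$ and discarding the non-negative term $v^{i}_{R}$ yields $\sum_{r=0}^{R-1} W_i(\boldsymbol{x}^{i}(\tilde{t}^{i}_{2r})) \leq v^{i}_{0}$ for every $R$. Letting $R \to \infty$, the series converges, so its summand tends to zero, i.e.\ $W_i(\boldsymbol{x}^{i}(\tilde{t}^{i}_{2r})) \to 0$. Since $W_i$ is positive definite, this forces $\boldsymbol{x}^{i}(\tilde{t}^{i}_{2r}) \to 0$ as $r \to \infty$: the state converges to the origin along the activation instants of every subsystem that is switched on infinitely often.

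To promote convergence at the activation instants to convergence of the whole trajectory, I would use that each $V_i$ is a Lyapunov function for the $i$-th dynamics (with $\boldsymbol{\nu}=0$), so $V_i(\boldsymbol{x}^{i}(\cdot))$ is non-increasing throughout every active interval $[\tilde{t}^{i}_{2r},\tilde{t}^{i}_{2r+1}]$. Two cases then cover every switching signal: either some subsystem is eventually permanently active, in which case its own Lyapunov decrease drives $\boldsymbol{x} \to 0$ directly; or for all large $t$ the active subsystem is one activated infinitely often, in which case each such interval is entered with a state of vanishing magnitude (previous step) and the intra-interval non-increase of $V_i$, combined with the $\mathcal{K}_\infty$ sandwiching of $V_i$, keeps the excursion arbitrarily small. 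Global asymptotic stability then follows, with Lyapunov stability obtained by the same comparison-function bounds after choosing the initial sublevel set small enough that no re-activation value can exceed the prescribed tolerance.

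The step I expect to be the main obstacle is exactly this last passage from the discrete activation-instant behavior to the continuous-time trajectory. Because \eqref{eq:MLF} constrains $V_i$ only at the even-indexed times, the delicate points are ruling out large inter-activation excursions \emph{without} invoking a dwell time, organizing the bookkeeping between subsystems activated finitely versus infinitely often, and reconciling the per-subsystem state dimensions $n_i$ with the shared continuous trajectory; establishing stability rather than mere attractivity under arbitrary switching is where the positive definiteness of every $W_i$ and the $\mathcal{K}_\infty$ bounds on the $V_i$ become indispensable.
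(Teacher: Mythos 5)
The paper gives no proof of this lemma at all --- it defers entirely to Liberzon \cite{liberzon_switched}, and your sketch reconstructs precisely the classical Multiple-Lyapunov-Functions argument found there: telescoping the re-activation decrease \eqref{eq:MLF} to get $W_i \to 0$ along activation instants, then containing inter-activation excursions via each $V_i$'s decrease on its own active intervals together with the class-$\mathcal{K}_\infty$ sandwich bounds. Your proposal is therefore essentially the same approach as the paper's (cited) proof and is sound as a sketch, the only unstated detail being that $W_i(\boldsymbol{x}^{i}(\tilde{t}^{i}_{2r})) \to 0$ forces $\boldsymbol{x}^{i}(\tilde{t}^{i}_{2r}) \to 0$ only because the sequence stays in a bounded sublevel set of the radially unbounded $V_i$.
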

The proof of Lemma \ref{lm:MLF} can be found \cite{liberzon_switched}. In the context of the control structure presented, the supervisor defines the switching sequence of each subsystem $\Xi_i$ according to its objective $\mathcal{J}$. That way, the supervisor realizes the switching signal $\sigma$ and ensures \eqref{eq:MLF}.


\section{Main Results} \label{sec:main}
The following notion of a \textit{parallel-Switched MPC} will be used subsequently.
\begin{definition}
\label{def:smpc}
\textit{A parallel-Switched MPC describes a set of Model Predictive Controllers running in parallel and a single plant forming a nonlinear switched system equal to \eqref{eq:switchedSystem}.}
\end{definition}
In the context of this work, a switch means that the supervisor decides which closed-loop system (MPC + plant) is active.

\subsection{Nominal: Leader-Restrictor Framework}
In the following, the nominal framework assumes $\boldsymbol{\nu}(t)=0$. One of the advantages of using an MPC is that a Lyapunov function of the closed-loop system is easily accessible. Hence, one can use Lemma \ref{lm:MLF} to guarantee stability by adopting the switching signal accordingly. Unfortunately, the standard MPC scheme \eqref{eq:ocp} produces a \textit{prediction mismatch} whenever the MPC is switched. This mismatch hinders a direct application of \eqref{eq:MLF} to guarantee stability by virtue of Lemma~\ref{lm:MLF}. The feasibility guarantee is lost because the it is not guaranteed that the initial state of the next time step is feasible for all MPCs.

\begin{definition}
\label{def:predictionMismatch}
\textit{A prediction mismatch for the $i$-th MPC is present if the measurement at time
instance $k$ does not match the first predicted state of the $i$-th MPC at the previous time instance $k-1$, s.t. $\boldsymbol{x}^{i}(k) \neq \boldsymbol{X}^{*}_{i,[1,1]}(k-1)$.}
\end{definition}

\begin{proposition}
\label{prop:predictionMismatchOccurence}
\textit{For nominal parallel-Switched MPC implementing \eqref{eq:ocp} each, a prediction mismatch cannot be ruled out when a switch occurs.}
\end{proposition}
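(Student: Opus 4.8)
The plan is to prove the claim constructively. Since ``cannot be ruled out'' asserts the impossibility of a universal no-mismatch guarantee, it suffices to exhibit one admissible scenario in which a switch produces $\boldsymbol{x}^{i}(k) \neq \boldsymbol{X}^{*}_{i,[1,1]}(k-1)$. The two objects to compare are the freshly sampled initial state $\boldsymbol{x}^{i}(k) = T^{i}(\boldsymbol{\xi}(t_k))$ handed to the $i$-th OCP, and the one-step-ahead prediction $\boldsymbol{X}^{*}_{i,[1,1]}(k-1) = \boldsymbol{x}^{*i}(k \mid k-1) = f_i(\boldsymbol{x}^{i}(k-1), \boldsymbol{u}^{*i}(k-1 \mid k-1))$ that this controller had produced one step earlier. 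The guiding observation is that the supervisor feeds only a single input to the plant, so the evolution of $\boldsymbol{\xi}$ on $(t_{k-1}, t_k]$ is dictated solely by whichever controller was active at $k-1$.

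First I would fix a switch at $k$: the supervisor selects controller $i$ at $k$, whereas some controller $j \neq i$ was active at $k-1$; such a $j$ exists because a switch, by definition, changes the active subsystem and $\mathcal{I}$ is minimal. By \eqref{eq:rhc} the input applied on $(t_{k-1}, t_k]$ is $\boldsymbol{u}^{*j}(k-1 \mid k-1)$, so the Translator returns $\boldsymbol{x}^{i}(k) = T^{i}(\boldsymbol{\xi}(t_k))$ for a plant trajectory driven by $\boldsymbol{u}^{*j}$, not by $\boldsymbol{u}^{*i}$. The $i$-th prediction, by contrast, was computed from the \emph{unapplied} input $\boldsymbol{u}^{*i}(k-1 \mid k-1)$ through $f_i$. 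Equality of the two can thus hold only if these distinct inputs induce the same successor state for the $i$-th controller, a coincidence that nothing in \eqref{eq:ocp} enforces. For the controller that remains active across the step, Assumption~\ref{asm:sampledDataSystem} gives $\boldsymbol{x}^{i}(k) = \boldsymbol{x}^{*i}(k \mid k-1)$, which is precisely why the phenomenon is tied to switching.

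The decisive step is to make the inequality concrete. I would let the two subsystems share a control-affine prediction model $f_i(x,u) = A x + B u$ with $B$ of full column rank, take identity translators and a plant whose exact discretization is $f$, so that under $\boldsymbol{\nu}=0$ the measurement reduces to $\boldsymbol{x}^{i}(k) = f(\boldsymbol{x}(k-1), \boldsymbol{u}^{*j}(k-1 \mid k-1))$, and equip the two controllers with different input weights $\boldsymbol{R}_i \neq \boldsymbol{R}_j$. At a suitably chosen common feasible state one checks that both OCPs in \eqref{eq:ocp} admit optimizers and that the differing penalties return distinct first inputs $\boldsymbol{u}^{*i}(k-1 \mid k-1) \neq \boldsymbol{u}^{*j}(k-1 \mid k-1)$; injectivity of $u \mapsto f(x,u)$ then yields $\boldsymbol{x}^{*i}(k \mid k-1) \neq \boldsymbol{x}^{i}(k)$, establishing the mismatch. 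The main obstacle I anticipate is expository rather than technical: I must argue that no internal property of the scheme constrains two independently solved OCPs to return identical first inputs, so that agreement is the non-generic exception and mismatch the rule; the explicit construction above settles this while respecting Assumptions~\ref{asm:sampledDataSystem}--\ref{asm:regularity}.
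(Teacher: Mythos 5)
Your proposal is correct and takes essentially the same approach as the paper: both hinge on the observation that, since $\mathcal{I}$ is minimal, the distinct OCPs \eqref{eq:ocp} are not forced to share a minimizer, so the successor state realized under the active controller's input need not coincide with the one-step prediction of the controller being switched to. Your added linear-quadratic construction with $\boldsymbol{R}_i \neq \boldsymbol{R}_j$ simply makes explicit what the paper leaves at ``it is not guaranteed,'' a strengthening of detail rather than a different route.
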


\begin{proof}
When initializing all MPCs with the same initial state measurement $\boldsymbol{x}(k)$, one MPC $i$ will be selected to be active, realizing the first predicted state in $\boldsymbol{X}^{*}_{i,[1,1]}(k)$. As the index set $\mathcal{I}$ is minimal, it is not guaranteed that all MPCs will have the same minimizer $\boldsymbol{U}^{i*}(k)$. Consequently, a prediction mismatch cannot be ruled out.
\end{proof}

In standard QIH-MPC, a prediction mismatch is ruled out via standard assumptions (no disturbance, no uncertainties, ...). That way, feasibility is guaranteed, see, e.g. \cite{rajhans} for details. To resolve this problem for nominal parallel-Switched MPC, the \textit{Leader-Restrictor} framework is introduced. Here, the OCP \eqref{eq:ocp} is augmented to
\begin{subequations}
    \label{eq:cOCP}
\begin{align}
    \hat{V}_i = &\min_{\boldsymbol{U}_i(k)} \quad J^{i}(\boldsymbol{x}^{i}(k), \boldsymbol{U}_i(k)) \\
\begin{split}
    \label{eq:cocp_qih}
    \text{s.t.} \quad & \boldsymbol{x}^{i}(j+1 \mid k) = f_i(\boldsymbol{x}^{i}(j \mid k),\boldsymbol{u}^{i}(j \mid k)) \\
    & \boldsymbol{x}^{i}(j \mid k) \in \mathcal{X}_i, \quad \boldsymbol{u}(j \mid k) \in \mathcal{U}_i \\
    & \boldsymbol{x}^{i}(k+N \mid k) \in \mathcal{X}^{N}_{i}, \quad \boldsymbol{x}^{i}(k \mid k) = \boldsymbol{x}^{i}(k)
\end{split} \\
\begin{split}
    \label{eq:cocp_dynamicComposition}
    \text{and} \quad & \boldsymbol{x}^{l}(s+1 \mid k) = f_l(\boldsymbol{x}^{l}(s \mid k),\boldsymbol{u}^{i}(s \mid k)) \\
    & \boldsymbol{x}^{l}(s \mid k) \in \mathcal{X}_l, \quad \boldsymbol{u}^{i}(s \mid k) \in \mathcal{U}_l \\
    & \boldsymbol{x}^{l}(k+N \mid k) \in \mathcal{X}^{N}_{l}, \\
    & \boldsymbol{x}^{l}(k+1 \mid k) = \boldsymbol{x}^{i}(k+1 \mid k)
    \end{split}
\end{align}
\end{subequations}
for all $j \in [k, k+N-1]$ and $s \in [k+1, k+N-1]$.
We call the OCP \eqref{eq:cOCP} \textit{composed}, as it composes all dynamics and constraints of $\mathcal{I}$. The $i$-th MPC, as it \textit{leads} the optimization, will be called the \textit{leader}. The constraint set of the $l$-th MPC, as it only further \textit{restricts} the solution, will be called the \textit{restrictor}. The following additional assumptions are made:
\begin{assumption}
    \label{asm:terminalAssumption}
    For all $i,l \in \mathcal{I}$, the terminal set of the $l$-th system $\mathcal{X}^N_l$ is invariant under action of the $i$-th terminal controller $\boldsymbol{K}_i$ and the terminal control input of the $i$-th system satisfies $-\boldsymbol{K}_i\boldsymbol{x}^{i} \in \mathcal{U}_l$.
\end{assumption}
\begin{assumption}
    \label{asm:feasibilityFirstCall}
    OCP \eqref{eq:cOCP} is feasible for $\boldsymbol{x}^i(0)$.
\end{assumption}


\begin{lemma}
\label{lm:cocp}
\textit{A nominal parallel-Switched MPC implementing \eqref{eq:cOCP} is recursively feasible for all switching signals if Assumptions \ref{asm:sampledDataSystem} to \ref{asm:feasibilityFirstCall} hold.}
\end{lemma}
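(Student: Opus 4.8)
The plan is to prove recursive feasibility by induction over the sample index $k$, exploiting that in the nominal setting ($\boldsymbol{\nu}\equiv 0$) the measured state exactly matches the leader's first prediction, so that the restrictor branch \eqref{eq:cocp_dynamicComposition} solved at time $k$ already furnishes a feasible warm-start for whatever subsystem the supervisor activates at $k+1$. The base case is immediate from Assumption~\ref{asm:feasibilityFirstCall}, which grants feasibility of \eqref{eq:cOCP} at $k=0$.

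For the inductive step, I would assume \eqref{eq:cOCP} is feasible at time $k$ with some leader $i$ and let $\boldsymbol{U}^{*}_i(k)$, with leader trajectory $\boldsymbol{X}^{*}_i(k)$ and restrictor trajectories $\boldsymbol{x}^{l}(\cdot\mid k)$, denote a minimizer. Applying the first input through \eqref{eq:rhc} and invoking Assumption~\ref{asm:sampledDataSystem} with $\boldsymbol{\nu}\equiv 0$ yields $\boldsymbol{x}(k+1)=\boldsymbol{x}^{i}(k+1\mid k)$, which by the coupling constraint $\boldsymbol{x}^{l}(k+1\mid k)=\boldsymbol{x}^{i}(k+1\mid k)$ is precisely the initial condition from which \emph{every} restrictor $l$ was certified both constraint-admissible and steerable into $\mathcal{X}^{N}_{l}$ at time $k$. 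The regularity in Assumption~\ref{asm:regularity} guarantees the involved predicted states are well defined.

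Now let $p\in\mathcal{I}$ be the arbitrary leader selected at $k+1$. I would construct a candidate input by shifting the restrictor-$p$ input, i.e. $\boldsymbol{u}^{p}(s\mid k+1)=\boldsymbol{u}^{i}(s\mid k)$ for $s\in[k+1,k+N-1]$, and appending the terminal control $\boldsymbol{u}^{p}(k+N\mid k+1)=-\boldsymbol{K}_{p}\,\boldsymbol{x}^{p}(k+N\mid k+1)$. The leader branch \eqref{eq:cocp_qih} at $k+1$ is then feasible because the restrictor-$p$ constraint at $k$ certifies the trajectory from $\boldsymbol{x}(k+1)$ up to $\mathcal{X}^{N}_{p}$, while Assumption~\ref{asm:terminalAssumption}, via invariance of $\mathcal{X}^{N}_{p}$ under $\boldsymbol{K}_{p}$ together with $-\boldsymbol{K}_{p}\boldsymbol{x}\in\mathcal{U}_{p}$, closes the terminal step. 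The remaining task is the restrictor branch \eqref{eq:cocp_dynamicComposition} at $k+1$: each restrictor $m$ must be initialized at the new leader's second predicted state $\boldsymbol{x}^{m}(k+2\mid k+1)=\boldsymbol{x}^{p}(k+2\mid k+1)$ and driven into $\mathcal{X}^{N}_{m}$ under the same candidate input.

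I expect this last point to be the main obstacle. After a switch the forced restrictor initialization is produced by $f_{p}$ rather than by $f_{m}$, so the restrictor trajectories certified at $k$ cannot be reused verbatim, and there is a one-step offset between the leader's and the restrictors' initializations. The argument must show that the composed structure, in conjunction with Assumption~\ref{asm:terminalAssumption} — cross-invariance of every terminal set $\mathcal{X}^{N}_{m}$ under every terminal controller $\boldsymbol{K}_{i}$ and admissibility $-\boldsymbol{K}_{i}\boldsymbol{x}\in\mathcal{U}_{m}$ — propagates feasibility across the switch: once all subsystems have been steered into their respective terminal sets by the end of the horizon at $k$, the shifted input keeps them there and the terminal controllers supply an admissible continuation, so the restrictor constraints at $k+1$ are met for every $m\in\mathcal{I}$. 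Closing this step rigorously, in particular reconciling the mismatched terminal-region dynamics so that the shared terminal invariance of Assumption~\ref{asm:terminalAssumption} applies uniformly, is where the bulk of the work lies, after which Lemma~\ref{lm:cocp} follows by induction.
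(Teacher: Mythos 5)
Your inductive construction is essentially the paper's proof: the base case from Assumption~\ref{asm:feasibilityFirstCall}, and, for the step, the shifted candidate $\bigl[\boldsymbol{U}^{*}_{i,[1,N-1]}(k),\, -\boldsymbol{K}_{p}\boldsymbol{x}^{p}(k+N \mid k)\bigr]$, whose feasibility for the new leader's branch \eqref{eq:cocp_qih} follows because the restrictor-$p$ constraints \eqref{eq:cocp_dynamicComposition} at time $k$ certify exactly that trajectory from the nominally exact measurement $\boldsymbol{x}^{p}(k+1)=\boldsymbol{X}^{*}_{i,[1,1]}(k)$, with Assumption~\ref{asm:terminalAssumption} supplying the admissible appended terminal step. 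Up to this point you have reproduced the paper's argument, including the case where the leader keeps leading.

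The difference is what comes after. The paper's proof stops there: it checks the candidate only ``w.r.t.\ the $l$-th controller,'' i.e., the new leader's own branch, and never verifies the restrictor branches of the remaining subsystems $m$ inside the composed OCP at $k+1$ --- precisely the step you single out as the main obstacle. Your diagnosis of that obstacle is correct: with heterogeneous prediction models, the new restrictor-$m$ trajectory must emanate from $\boldsymbol{x}^{p}(k+2\mid k+1)=f_{p}(\boldsymbol{x}^{p}(k+1),\boldsymbol{u}^{i}(k+1\mid k))$, whereas time $k$ only certified an $f_{m}$-trajectory through $f_{m}(\boldsymbol{x}^{p}(k+1),\boldsymbol{u}^{i}(k+1\mid k))$; these points differ whenever $f_{m}\neq f_{p}$, and Assumption~\ref{asm:terminalAssumption} cannot repair this because it only governs behavior inside the terminal sets, while the mismatch occurs over the whole horizon (your idea that subsystems already ``steered into their terminal sets'' stay there helps only at the final prediction step, not mid-horizon, since the certified trajectories reach the terminal sets only at the end of the horizon). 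The gap dissolves only when all $f_{i}$ coincide --- the situation in the paper's numerical example --- for then every restrictor trajectory equals the leader's, the state and input constraints certified at $k$ transfer verbatim, and Assumption~\ref{asm:terminalAssumption} closes the single cross-terminal step, namely $-\boldsymbol{K}_{p}\boldsymbol{x}\in\mathcal{U}_{m}$ and $f(\boldsymbol{x},-\boldsymbol{K}_{p}\boldsymbol{x})\in\mathcal{X}^{N}_{m}$ for $\boldsymbol{x}\in\mathcal{X}^{N}_{m}$. In short: your proposal is incomplete exactly where you say it is, but that incompleteness is shared by --- not resolved in --- the paper's own proof; you have exposed a soft spot in the published argument rather than missed an idea it supplies.
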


\begin{proof}
The objective function and the first set of constraints \eqref{eq:cocp_qih} are equivalent to a standard QIH-MPC. Following, e.g., \cite{rajhans}, recursive feasibility of the $i$-th MPC of \eqref{eq:cOCP} is guaranteed by the fact that the optimal input sequence $\boldsymbol{U}^{*}_{i}(k)$ drives the system into the terminal set $\mathcal{X}_i^N$ under control of the $i$-th MPC. Consequently, at step $k+1$, the input sequence $\boldsymbol{U}^{\text{feas}}_{i}(k+1) = \left[ \boldsymbol{U}^{*}_{i,[1,N-1]}(k), -K_i \boldsymbol{x}^{i}(N \mid k) \right]$ is feasible at $k+1$ as $\boldsymbol{x}^{i}(k+1) = \boldsymbol{X}^{*}_{i,[1,1]}(k)$ under the first optimal input $\boldsymbol{U}^{*}_{i,[0,0]}(k)$.
Similarly, as $\boldsymbol{U}^{*}_{i}(k)$ creates a trajectory $\boldsymbol{X}_{l}(k)$ starting at $\boldsymbol{X}^{*}_{i,[1,1]}(k)$ that satisfies the respective set of constraints
\eqref{eq:cocp_dynamicComposition}, the input sequence
$\boldsymbol{U}^{\text{feas}}_{l}(k+1) = \left[ \boldsymbol{U}^{*}_{i,[1,N-1]}(k), -K_l \boldsymbol{x}^{l}(N \mid k) \right]$ is also feasible w.r.t. to the $l$-th controller.
\end{proof}

\begin{remark}
Without Assumption \ref{asm:terminalAssumption}, it is not guaranteed that $\boldsymbol{U}^{\text{feas}}_{i}(k+1)$ is feasible for the set of constraints \eqref{eq:cocp_dynamicComposition}.
\end{remark}

\begin{remark}
    The optimal value function $\hat{V}_i$ is a Lyapunov function for the closed loop of \eqref{eq:plant} and the $i$-th MPC. Standard arguments for QIH-MPC hold, see, e.g., \cite{rajhans} for details.
\end{remark}

\begin{theorem}
\label{thm:nominalSMPC}
Let $W_i: \mathbb R^{n_i} \to \mathbb R$ be a p.d. function for all $i \in \mathcal{I}$. The parallel-Switched MPC is asymptotically stable provided that the supervisor constraints the switching to
\begin{equation}
    \label{eq:switchingMLFcomposed}
    \hat V_i\left(\boldsymbol{x}^{i}\left(\tilde{t}^{i}_{2r+2}\right)\right) - \hat V_i\left(\boldsymbol{x}^{i}\left(\tilde{t}^{i}_{2r}\right)\right)
    \leq -W_i\left(\boldsymbol{x}^{i}\left(\tilde{t}^{i}_{2r}\right)\right)
\end{equation}
for all $i \in \mathcal{I}$.
\end{theorem}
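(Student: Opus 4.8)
The plan is to verify that the hypotheses of the multiple-Lyapunov-function result, Lemma~\ref{lm:MLF}, hold for the family of closed loops constituting the parallel-Switched MPC, and then to invoke it directly. By Definition~\ref{def:smpc} the parallel-Switched MPC is a switched system of the form \eqref{eq:switchedSystem} with $\boldsymbol{\nu}=0$, whose $i$-th mode is the closed loop of the plant \eqref{eq:plant} driven by the $i$-th MPC through the receding-horizon law \eqref{eq:rhc}. The candidate Lyapunov functions for the modes are the optimal value functions $\hat V_i$ of the composed OCP \eqref{eq:cOCP}.

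First I would establish that each $\hat V_i$ is a genuine Lyapunov function for its own mode while that mode is active. This is the content of the Remark following Lemma~\ref{lm:cocp}: since the objective and the first block of constraints \eqref{eq:cocp_qih} coincide with standard QIH-MPC, the usual descent argument (shift the optimal sequence and append the terminal control $-\boldsymbol{K}_i\boldsymbol{x}^i$) gives $\hat V_i(\boldsymbol{x}^i(k+1)) - \hat V_i(\boldsymbol{x}^i(k)) \le -\big(||\boldsymbol{x}^i(k)||^2_{\boldsymbol{Q}_i} + ||\boldsymbol{u}^{*i}(k\mid k)||^2_{\boldsymbol{R}_i}\big) \le 0$ at every sample time at which mode $i$ is active, so $\hat V_i$ is non-increasing on each activation interval $[\tilde t^i_{2r},\tilde t^i_{2r+1}]$. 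Assumption~\ref{asm:sampledDataSystem} is what lets me evaluate this discrete descent consistently along the sampled-data trajectory and align the switching instances $\tilde t^i$ with the sample times.

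Second, I would use Lemma~\ref{lm:cocp} to guarantee that $\hat V_i$ is actually \emph{defined} at every reactivation. While mode $i$ is inactive the prediction state $\boldsymbol{x}^i(k)=T^i(\boldsymbol{\xi}(t_k))$ still drifts, because the plant is driven by some other mode; the value $\hat V_i(\boldsymbol{x}^i(\tilde t^i_{2r+2}))$ appearing in \eqref{eq:switchingMLFcomposed} is therefore only meaningful if \eqref{eq:cOCP} remains feasible for this drifted state. Recursive feasibility of the composed OCP under \emph{arbitrary} switching, supplied by Lemma~\ref{lm:cocp} via the restrictor constraints \eqref{eq:cocp_dynamicComposition} together with Assumptions~\ref{asm:terminalAssumption} and \ref{asm:feasibilityFirstCall}, furnishes exactly this. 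Identifying $\hat V_i$ with the $V_i$ of Lemma~\ref{lm:MLF}, the supervisor constraint \eqref{eq:switchingMLFcomposed} is then literally condition \eqref{eq:MLF}, so every hypothesis of Lemma~\ref{lm:MLF} is met and the switched system of prediction states is globally asymptotically stable. A final step transfers this to the plant: since $\overline{\alpha}\in\mathcal{K}_\infty$, Assumption~\ref{asm:boundedNorm} yields $||\boldsymbol{\xi}(t_k)||_2 \le \overline{\alpha}^{-1}\big(\sum_i||\boldsymbol{x}^i(k)||_2\big)$, and stability and convergence of all $\boldsymbol{x}^i$ carry over to $\boldsymbol{\xi}$ using the regularity of the mappings $T^i$.

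I expect the main obstacle to be the bridge between the discrete-time MPC machinery and the continuous-time switched-system statement of Lemma~\ref{lm:MLF}. Two points need care: (i) ensuring that the per-sample descent of $\hat V_i$ and the cross-activation decrease \eqref{eq:switchingMLFcomposed} together reproduce the Branicky-type monotonicity that Lemma~\ref{lm:MLF} presumes on the continuous-time flow, which rests on the sampled-data consistency of Assumption~\ref{asm:sampledDataSystem} and on switches occurring only at sample times; and (ii) guaranteeing that the drifted value $\hat V_i(\boldsymbol{x}^i(\tilde t^i_{2r+2}))$ is finite, i.e. that the composed OCP never becomes infeasible upon reactivation — the very reason the Leader-Restrictor construction of \eqref{eq:cOCP} is introduced.
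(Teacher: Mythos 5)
Your proposal is correct and follows essentially the same route as the paper's own proof: identify $\hat V_i$ as the mode-wise Lyapunov functions, invoke Lemma~\ref{lm:cocp} for recursive feasibility under arbitrary switching so that \eqref{eq:switchingMLFcomposed} is well-defined at every reactivation, and then apply Lemma~\ref{lm:MLF} with \eqref{eq:switchingMLFcomposed} playing the role of \eqref{eq:MLF}. Your additional care regarding the sampled-data bridge and the transfer of stability from the prediction states to the plant via Assumption~\ref{asm:boundedNorm} is more explicit than the paper's terse three-sentence argument, but it is elaboration of the same idea rather than a different approach.
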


\begin{proof}
Recall that $\hat{V}_i$ is a Lyapunov function for the $i$-th system of the family \eqref{eq:switchedSystem} which implements \eqref{eq:cOCP} each. By virtue of Lemma \ref{lm:cocp}, recursive feasibility of the parallel-Switched MPC is always guaranteed, enabling any switching signal. Hence, \eqref{eq:switchingMLFcomposed} assures asymptotic stability by virtue of Lemma \ref{lm:MLF}.
\end{proof}

\subsection{Robust: Multiple ISS-Lyapunov Functions}
We now introduce an alternative to Theorem \ref{thm:nominalSMPC} for systems with disturbances which guarantees recursive feasibility and ISS. Regarding ISS, the following result is an extension of Lemma \ref{lm:MLF} with $\boldsymbol{\nu} \neq 0$.
\begin{assumption}
    \label{asm:knownDisturbance}
    The disturbance $\boldsymbol{\nu} \in \mathcal{D}$ is known at time $t$.
\end{assumption}
\begin{theorem}
\label{thm:robustSMPC}
\textit{Let $W_i: \mathbb R^{n_i} \to \mathbb R$ and $\gamma_i$ be class $\mathcal{K}_{\infty}$ functions for all $i \in \mathcal{I}$. The parallel-Switched MPC is ISS w.r.t. the disturbance $\boldsymbol{\nu}$ provided that the supervisor constraints the switching to
\begin{multline}
    \label{eq:ISS-MLF}
    \overline V_i\left(\boldsymbol{x}^{i}\left(\tilde{t}^{i}_{2r+2}\right)\right) - \overline V_i\left(\boldsymbol{x}^{i}\left(\tilde{t}^{i}_{2r}\right)\right) \\
    \leq -W_i\left(\boldsymbol{x}^{i}\left(\tilde{t}^{i}_{2r}\right)\right) + \gamma_i(||\boldsymbol{\nu}(\tilde t^i_{2r+2})||_2)
\end{multline}
for all $i \in \mathcal{I}$.}
\end{theorem}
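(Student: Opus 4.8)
The plan is to follow the architecture of the proof of Theorem~\ref{thm:nominalSMPC}, but with the plain Lyapunov ingredients replaced by their input-to-state counterparts and Lemma~\ref{lm:MLF} replaced by an ISS version of the multiple-Lyapunov-function argument. First I would dispatch feasibility, which is simpler here than in the nominal case: because the robust OCP \eqref{eq:rocp} penalizes the initial-state mismatch through $\lambda_i V_{\delta,i}$ rather than enforcing it as a hard constraint, it is feasible for every measured $\boldsymbol{x}^i(k)$ and hence for every switching decision of the supervisor. Thus recursive feasibility under arbitrary switching holds automatically, without the leader-restrictor construction and without Assumption~\ref{asm:terminalAssumption}; no analogue of Lemma~\ref{lm:cocp} is required. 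By the result recalled from \cite{kohler_robust} in Section~\ref{sec:prelim} (which invokes Assumption~\ref{asm:openloopAS}), each $\overline V_i$ is then an ISS-Lyapunov function for the $i$-th closed loop, satisfying \eqref{eq:issV1}--\eqref{eq:issV2} with class-$\mathcal{K}_\infty$ functions $\alpha_{1,i},\alpha_{2,i},\alpha_{3,i}$ and $\gamma_i$ on every interval during which subsystem $i$ is active.

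Next I would carry out the ISS multiple-Lyapunov-function step. On each active interval $[\tilde t^i_{2r},\tilde t^i_{2r+1}]$ the decrease \eqref{eq:issV2} propagates forward to a within-interval bound of the form \eqref{eq:iss}, expressed through the activation value $\overline V_i(\boldsymbol{x}^i(\tilde t^i_{2r}))$ and $||\boldsymbol{\nu}||_\infty$. The switching condition \eqref{eq:ISS-MLF} then governs how the activation values evolve across idle phases: setting $v^i_r \coloneqq \overline V_i(\boldsymbol{x}^i(\tilde t^i_{2r}))$, it reads $v^i_{r+1} \leq v^i_r - W_i(\boldsymbol{x}^i(\tilde t^i_{2r})) + \gamma_i(||\boldsymbol{\nu}||_\infty)$. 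Since $\mathcal{X}_i$ is compact, the positive-definite $W_i$ admits a lower bound $W_i(\boldsymbol{x}) \geq \underline w_i(||\boldsymbol{x}||_2)$ with $\underline w_i \in \mathcal{K}$, and combining this with the upper estimate $\overline V_i(\boldsymbol{x}) \leq \alpha_{2,i}(||\boldsymbol{x}||_2)$ yields a class-$\mathcal{K}$ function $\rho_i = \underline w_i \circ \alpha_{2,i}^{-1}$ with $W_i(\boldsymbol{x}) \geq \rho_i(\overline V_i(\boldsymbol{x}))$. Hence the scalar recursion $v^i_{r+1} \leq v^i_r - \rho_i(v^i_r) + \gamma_i(||\boldsymbol{\nu}||_\infty)$ holds, which is a discrete-time ISS estimate on the sequence $(v^i_r)_r$. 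A standard comparison argument (see \cite{jiang_iss}) then gives a $\mathcal{KL}$ decay of $v^i_r$ together with a $\mathcal{K}$ gain in $||\boldsymbol{\nu}||_\infty$; folding this into the within-interval bound and using the lower estimate \eqref{eq:issV1} to pass back from $\overline V_i$ to $||\boldsymbol{x}^i||_2$ produces the ISS bound \eqref{eq:iss}, or equivalently the asymptotic gain \eqref{eq:ag} together with uniform boundedness, for the switched trajectory.

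The main obstacle is the aggregation across switches, i.e.\ ensuring that the disturbance gains injected at each activation do not accumulate. The key is that the contraction $-\rho_i(v^i_r)$ dominates the gain $+\gamma_i(||\boldsymbol{\nu}||_\infty)$ whenever $v^i_r$ lies outside a disturbance-dependent ball, so that each sequence $(v^i_r)_r$ genuinely decays until it enters that ball and is subsequently trapped there; this is what turns the per-subsystem condition \eqref{eq:ISS-MLF} into a global estimate. As in the nominal case, the delicate point is that the supervisor constrains each $\overline V_i$ only at its own activation instants $\tilde t^i_{2r}$, yet this must suffice to bound the genuinely switched closed-loop state -- the same mechanism that underlies Lemma~\ref{lm:MLF}, now carried through with the extra additive disturbance term propagated along the comparison recursion.
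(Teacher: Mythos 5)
Your proposal is correct in its core mechanism but proves the theorem by a genuinely different route than the paper. The paper never forms a comparison recursion: it invokes the equivalence of ISS with 0-GAS plus the asymptotic-gain property from \cite{tran_equivalence}, obtains 0-GAS by noting that for $\boldsymbol{\nu}=0$ the constraint \eqref{eq:ISS-MLF} collapses to \eqref{eq:MLF} so that Lemma~\ref{lm:MLF} applies, and then establishes AG by a limsup manipulation: the subsystems' own AG property together with the upper bound in \eqref{eq:issV1} gives $\varlimsup_{r\to\infty}\overline V_i(\boldsymbol{x}^i(\tilde t^i_{2r}))\leq(\alpha_{2,i}\circ\gamma_i)(\|\boldsymbol{\nu}\|_\infty)$, hence a bound on the limsup of the left-hand side of \eqref{eq:ISS-MLF}; rearranging the constraint then forces $\varlimsup_{r\to\infty}W_i\left(\boldsymbol{x}^i\left(\tilde t^i_{2r}\right)\right)\leq(\alpha_{2,i}\circ\gamma_i)(\|\boldsymbol{\nu}\|_\infty)+\gamma_i(\|\boldsymbol{\nu}\|_\infty)$, and inverting $W_i$ and maximizing over $i\in\mathcal I$ yields the gain. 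Your route --- lower-bounding $W_i$ by $\rho_i\circ\overline V_i$, deriving the scalar recursion $v^i_{r+1}\leq v^i_r-\rho_i(v^i_r)+\gamma_i(\|\boldsymbol{\nu}\|_\infty)$ on activation values, and applying the discrete-time comparison lemma of \cite{jiang_iss} --- is more self-contained and quantitatively stronger: it yields a uniform $\mathcal{KL}$ transient estimate plus gain directly (ISS by definition \eqref{eq:iss}), so you never need the 0-GAS/AG equivalence, and you avoid the paper's delicate step of applying each subsystem's AG property to values at its activation instants, which are in fact shaped by the \emph{other} controllers. The price is the comparison-lemma bookkeeping (e.g.\ modifying $\rho_i$ so that $\mathrm{id}-\rho_i$ is nondecreasing), which you correctly flag as standard. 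Feasibility is dispatched identically in both proofs.

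One step is missing to reach the theorem's actual claim: by Definition~\ref{def:smpc} the parallel-Switched MPC comprises the plant, yet your bounds live on the subsystem states $\boldsymbol{x}^i\in\mathbb R^{n_i}$. The paper's final step passes from $\varlimsup_{r\to\infty}\left\|\boldsymbol{x}^i\left(\tilde t^i_{2r}\right)\right\|_2\leq\Psi(\|\boldsymbol{\nu}\|_\infty)$ to the plant state via Assumption~\ref{asm:boundedNorm}, concluding $\varlimsup_{r\to\infty}\left\|\boldsymbol{\xi}\left(\tilde t^i_{2r}\right)\right\|_2\leq\Omega(\|\boldsymbol{\nu}\|_\infty)$ with $\Omega=\overline{\alpha}^{-1}\circ M_c\Psi$. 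Your sketch ends with an ISS statement for ``the switched trajectory'' without this translation; appending it (your per-subsystem $\mathcal{KL}$-plus-gain bounds feed into Assumption~\ref{asm:boundedNorm} in exactly the same way) completes the argument.
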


\begin{proof}
Note that the optimal value function $\overline V_i$ is an ISS-Lyapunov function corresponding to the $i$-th system of the family \eqref{eq:switchedSystem} which implements \eqref{eq:rocp} each and that recursive feasibility is guaranteed by the design of \eqref{eq:rocp}. In order to show that the switched system \eqref{eq:switchedSystem} with OCP \eqref{eq:rocp} and constraint \eqref{eq:ISS-MLF} is indeed ISS, the 0-GAS property and AG is needed \cite{tran_equivalence}. If $\boldsymbol{\nu} = 0$, inequality \eqref{eq:ISS-MLF} becomes \eqref{eq:MLF} since $\overline{V}_i$ are (ISS)-Lyapunov functions according to \cite{kohler_robust}, hence the pSMPC is 0-GAS. In the presence of a disturbance signal $\boldsymbol{\nu}(\cdot)$ recall from \eqref{eq:issV1}
\begin{equation}
    V_i(\boldsymbol{x}^{i}) \leq \alpha_2 \left(||\boldsymbol{x}||_2\right)
\end{equation}
and since each subsystem in $\mathcal{I}$ is AG
\begin{equation}
    \varlimsup_{k \rightarrow + \infty} || \boldsymbol{x}(k,\boldsymbol{x}_0,\boldsymbol{\nu}) ||_2 \leq \gamma_i(||\boldsymbol{\nu}||_{\infty})
\end{equation}
we can state the following bound:
\begin{equation}
\label{eq:limitV}
\varlimsup_{r \rightarrow \infty} V_i\left(\boldsymbol{x}^{i}\left(\tilde{t}^{i}_{2r}\right)\right) \leq \left( \alpha_{2,i} \circ \gamma_i \right)(||\boldsymbol{\nu}||_{\infty})
\end{equation}
Since $\alpha_{i,2},\gamma_i \in \mathcal{K}_{\infty}$, define $\delta_i \coloneqq \left( \alpha_{2,i} \circ \gamma_i \right) \in \mathcal{K}_{\infty}$ for all $i \in \mathcal{I}$.
Consequently, we can bound the limit of the left hand side (LHS) of \eqref{eq:ISS-MLF} as
\begin{equation}
\label{eq:limitLHS}
\varlimsup_{r \rightarrow \infty} \left| V_i\left(\boldsymbol{x}^{i}\left(\tilde{t}^{i}_{2r+2}\right)\right) - V_i\left(\boldsymbol{x}^{i}\left(\tilde{t}^{i}_{2r}\right)\right) \right| \leq \delta_i(||\boldsymbol{\nu}||_{\infty}).
\end{equation}
Hence, at each activation instance (assuming a large enough $t \geq 0$), Eq.~\eqref{eq:limitLHS} holds as each ISS-Lyapunov function is naturally bounded by zero from below. Using the lower limit of \eqref{eq:limitLHS} yields
\begin{equation}
    -\delta_i(||\boldsymbol{\nu}||_{\infty}) \leq \varliminf_{r \rightarrow \infty} \left[ -W_i\left(\boldsymbol{x}^{i}\left(\tilde{t}^{i}_{2r}\right)\right) + \gamma_i(||\boldsymbol{\nu}(\tilde t^i_{2r+2})||_2) \right].
\end{equation}
As
\begin{multline}
    -W_i\left(\boldsymbol{x}^{i}\left(\tilde{t}^{i}_{2r}\right)\right) + \gamma_i(||\boldsymbol{\nu}(\tilde t^i_{2r+2})||_2) \\
    \leq  -W_i\left(\boldsymbol{x}^{i}\left(\tilde{t}^{i}_{2r}\right)\right) + \gamma_i\left(\|\boldsymbol{\nu}\|_{\infty}\right) 
\end{multline}
it follows that
\begin{align}
    -\delta_i(||\boldsymbol{\nu}||_{\infty}) &\leq \varliminf_{r \rightarrow \infty} \left[ -W_i\left(\boldsymbol{x}^{i}\left(\tilde{t}^{i}_{2r}\right)\right) \right] + \gamma_i\left(\|\boldsymbol{\nu}\|_{\infty}\right) \\
    &\leq - \varlimsup_{r \rightarrow \infty} \left[ W_i\left(\boldsymbol{x}^{i}\left(\tilde{t}^{i}_{2r}\right)\right) \right] + \gamma_i\left(\|\boldsymbol{\nu}\|_{\infty}\right) 
\end{align}
and finally
\begin{equation}
\label{eq:finalUpperBound}
    \delta_i(||\boldsymbol{\nu}||_{\infty}) + \gamma_i\left(\|\boldsymbol{\nu}\|_{\infty}\right) \geq \lim_{r \rightarrow \infty} \left[ W_i\left(\boldsymbol{x}^{i}\left(\tilde{t}^{i}_{2r}\right)\right) \right].
\end{equation}
Define $\Phi_i \coloneqq \delta_i + \gamma_i \in \mathcal{K}_\infty$ for all $i \in \mathcal{I}$. 
Then
\begin{align}
    \varlimsup_{r \rightarrow \infty} \left|\left|\boldsymbol{x}^{i}\left(\tilde{t}^{i}_{2r}\right)\right|\right|_{2} \leq (W_i^{-1} \circ \Phi_i) \left(\|\boldsymbol{\nu}\|_{\infty}\right)
\end{align}
where $W_i^{-1}$ and $W_i^{-1} \circ \Phi_i$ are again class $\mathcal K_\infty$ functions. Finally, let $\Psi \left(\|\boldsymbol{\nu}\|_{\infty}\right) \coloneqq \max\limits_{i \in \mathcal{I}} (W_i^{-1}\circ\Phi_i)\left(\|\boldsymbol{\nu}\|_{\infty}\right)$, with $\Psi \in K_\infty$, then
\begin{equation}
    \varlimsup_{r \rightarrow \infty} \left|\left|\boldsymbol{x}^{i}\left(\tilde{t}^{i}_{2r}\right)\right|\right|_{2} \leq \Psi\left(\|\boldsymbol{\nu}\|_{\infty}\right)
\end{equation}
for all $i\in \mathcal{I}$. Consequently, all subsystems are is ISS w.r.t. the disturbance $\boldsymbol{\nu}$. Moreover, because of Assumption \ref{asm:boundedNorm}, it follows
\begin{equation}
    \varlimsup_{r \rightarrow \infty} \overline{\alpha}\left(\left|\left|\boldsymbol{\xi}\left(\tilde{t}^{i}_{2r}\right)\right|\right|_2\right) \leq M_c \Psi\left(\|\boldsymbol{\nu}\|_{\infty}\right)
\end{equation}
which yields
\begin{equation}
    \varlimsup_{r \rightarrow \infty} \left|\left|\boldsymbol{\xi}\left(\tilde{t}^{i}_{2r}\right)\right|\right|_{2} \leq \Omega\left(\|\boldsymbol{\nu}\|_{\infty}\right)
\end{equation}
where $\Omega \coloneqq \left( \overline{\alpha}^{-1} \circ M_c \Psi \right) \in \mathcal{K}_\infty$.
Hence, the plant is ISS w.r.t. the disturbance $\boldsymbol{\nu}$.
\end{proof}

\subsection{Discussion}
In this work, we switch between multiple MPC to solve a multiple-objective optimal control task. As long as a single MPC is active, its respective optimal value function decreases in the sense of an (ISS)-Lyapunov function until the supervisor decides to switch to a different MPC. In the nominal framework, switching is constrained by the strict dissipativity condition \eqref{eq:switchingMLFcomposed}. However, in any applied control system disturbances will occur, hence \eqref{eq:switchingMLFcomposed} might be too restrictive for switching in practical application. In the robust framework, on the other hand, the disturbance term $\gamma_i(\cdot)$ weakens the dissipativity constraint and hence facilitates switching under disturbances. However, due to the prediction mismatch, switching from a state that is infeasible for the MPC to be switched to may also lead to a prohibitive increase of the optimal value function in \eqref{eq:ISS-MLF}. One can further weaken the dissipativity constraint by modelling the prediction mismatch as a disturbance. In this case, and if switching occurs for $t \rightarrow \infty$, the system will only be practically stable. Lastly, note that the functions $W_i$ and $\gamma_i$ in \eqref{eq:switchingMLFcomposed} and \eqref{eq:ISS-MLF} are design parameters, as the switching signal is not known a priori.


\section{Numerical Example} \label{sec:numex}
We demonstrate the effectiveness of Theorem \ref{thm:nominalSMPC} and \ref{thm:robustSMPC} applied to a real world system. The comparison without disturbance is referred to as `nominal case', whereas the comparison with disturbance is referred to as `robust case.' 
A linear system is selected to facilitate the synthesis of terminal ingredients. 

\subsection{Setup}
We consider the linearized continuous longitudinal flight dynamics of a generic fixed-wing aircraft. The states $\boldsymbol{x} = \left( \alpha, q, V, \theta \right)$ are the angle of attack $\alpha$, the pitch rate $q$, the velocity $V$ and the pitch angle $\theta$. The input vector is given by $\boldsymbol{u} = \left( \eta, \delta \right)$ with the elevator deflection $\eta$ and the throttle command $\delta$. The system is fully observable and controllable, furthermore, all states are assumed to be measurable. All states and inputs have box constraints with $|\alpha| \leq \SI{10}{\deg}$, $|q| \leq \SI{100}{\deg\per\second}$, $|V| \leq \SI{10}{\meter\per\second}$, $|\theta| \leq \SI{50}{\deg}$, $|\eta| \leq \SI{50}{\deg}$ and $\delta \in [0,1]$. The control task is to stabilize the aircraft from the initial condition $\boldsymbol{x}_0 = \left( -10, 0 , 0, 0 \right)$.
In this example, the pSMPC approach is compared with three single QIH-MPC setups which employ \eqref{eq:ocp} and \eqref{eq:rocp} in the nominal case and robust case, respectively. All MPCs have the same nominal discrete prediction model. The difference between the MPCs is the definition of the weight matrices $\boldsymbol{Q}_i$ and $\boldsymbol{R}_i$ as well as the resulting terminal ingredients. Given the weightings
\begin{equation}
    \boldsymbol{Q}_1=\begin{bmatrix} 1 & 0 & 0 & 0 \\ 0 & 1 & 0 & 0 \\ 0 & 0 & 0.1 & 0 \\ 0 & 0 & 0 & 1 \end{bmatrix} ,\quad
    \boldsymbol{Q}_2=\begin{bmatrix} 5 & 0 & 0 & 0 \\ 0 & 5 & 0 & 0 \\ 0 & 0 & 0.5 & 0 \\ 0 & 0 & 0 & 5 \end{bmatrix}
\end{equation}
the \textbf{Equal-$\mathbf{I}$ MPC} is weighted with $\boldsymbol{Q}_1$ and $\boldsymbol{R} = \mathbb{I}^{2 \times 2}$. The \textbf{High-$\mathbf{Q}$ MPC} has $\boldsymbol{Q}_2$ and $\boldsymbol{R} = \mathbb{I}^{2 \times 2}$. The weights for the \textbf{High-$\mathbf{R}$ MPC} are $\boldsymbol{Q}_1$ and $\boldsymbol{R} = 5\mathbb{I}^{2 \times 2}$. The pSMPC can switch between \textbf{High-$\mathbf{Q}$} and \textbf{High-$\mathbf{R}$ MPC}.

The terminal sets $\mathcal{X}^{N}_{i}$ are selected based on the Maximal Output Admissible Set \cite{moas}. The solutions of Discrete Algebraic Riccati Equations (DARE) are used to compute discrete Linear-Quadratic Regulators $\boldsymbol{K}_i$ and the terminal penalties $\boldsymbol{P}_i$. The additional penalty functions $V_{\delta,i}$ are designed according to \cite[Section~6]{kohler_robust}. 
The resulting optimal control problems in \eqref{eq:rocp} and \eqref{eq:cOCP} are formulated as Quadratic Programs (QP) and solved via CasADi's \cite{casadi} Opti stack with the conic solver qpoases \cite{qpoases}. 
The prediction horizon is $N = 40$ for each MPC. The simulation runs with $\Delta t = \SI{0.01}{\second}$ while the MPCs have a discretization of $\Delta t_{MPC} = \SI{0.05}{\second}$. However, each MPC is called with $\Delta t$. We choose $W(\boldsymbol{x})= \sigma \| \boldsymbol{x} \|_2^2$ with $\sigma = 10^{-1}$ in the nominal case and $\sigma = 10^{-3}$ in the robust case, while $\gamma(||\boldsymbol{\nu}||_2)=10^4||\boldsymbol{\nu}||_2^2$. For each robust MPC, we set $\lambda = 10$. The disturbance is modelled by a Gaussian noise applied to the elevator with expectation $\mu_{\nu} = \SI{0}{\deg}$ and standard deviation $\sigma_{\nu} = \SI{0.5}{\deg}$. The prediction mismatch is added to the disturbance. It is assumed that the disturbance can be observed perfectly.

\subsection{Supervisor Objective}
To demonstrate the advantage of the pSMPC approach, the high-level objective of the supervisor is to minimize a deviation from the total energy of the aircraft at the trim point. The total energy $TE$ is defined as the  sum of the kinetic energy deviation $KE = m/2V^2$ and the potential energy deviation $PE = mg\Delta z$. This objective can be motivated by the fact that the aircraft's trim point is usually optimized w.r.t. fuel consumption. Maintaining the trim speed and trim altitude is therefore crucial for efficiency. The proposed pSMPC framework facilitates such an incorporation of additional control objectives. The supervisor objective $\mathcal{J}$ is thus
\begin{equation}
\label{eq:superObj}
    \mathcal{J} = \sum\limits_{j=0}^{N} \underbrace{\sqrt{KE(j)^2 + PE(j)^2}}_{\Delta E},
\end{equation}
where the sum is calculated with the predicted trajectories of each MPC at each simulation step and supervisor call. The altitude is computed via an integration of the vertical component of the velocity $\dot{z}=\sin(\gamma)V$, where $\gamma = \theta-\alpha$ is the flight path angle. 

\subsection{Results}
Table \ref{tbl:results} shows the numerical values of the supervisor objective for all controllers. The values of $\mathcal{J}$ shown in Table \ref{tbl:results} are computed as a post-processing quantity based on the simulated states of the plant for which \eqref{eq:superObj} is evaluated over the full simulation length of $\SI{5}{\second}$. The pSMPC performs best in both the nominal and robust case.

\vspace{-1em}
\begin{table}[h]
    \centering
    \caption{Summary of results}
    \label{tbl:results}
    \begin{tabular}{|p{1.5cm}|p{2.25cm}|p{2.25cm}|p{0.5cm}|}
        \hline
        \textbf{Controller} & \textbf{Nominal} $\mathcal{J}$ [MJ] &  \textbf{Robust} $\mathcal{J}$ [MJ] & \textbf{ID} \\
        \hline
        Equal-$\mathbf{I}$ & 4.69 & 4.87  & -\\
        \hline
        High-$\mathbf{Q}$ & 7.80 & 7.95  & 1\\
        \hline
        High-$\mathbf{R}$ & 3.79 & 4.23  & 2\\
        \hline
        pSMPC & 3.65 & 4.15  & -\\
        \hline
    \end{tabular}
\end{table}

Figures \ref{fig:nominal_deltaE} and \ref{fig:nominal_switching} show energy deviation and the switching signal as well as the switching permission for the nominal case. The switching permission shows whether Eq. \eqref{eq:switchingMLFcomposed} or \eqref{eq:ISS-MLF} are fulfilled (1) or violated (0). The switching signal shows the active controller where initially a high state penalty and afterwards a high input penalty results in a more efficient control of the aircraft. Regarding the dissipativity constraints, \eqref{eq:switchingMLFcomposed} is always satisfied, indicating strictly decreasing optimal values despite the switch.

\begin{figure}[h]
    \begin{center}
        \includegraphics[width=1\linewidth]{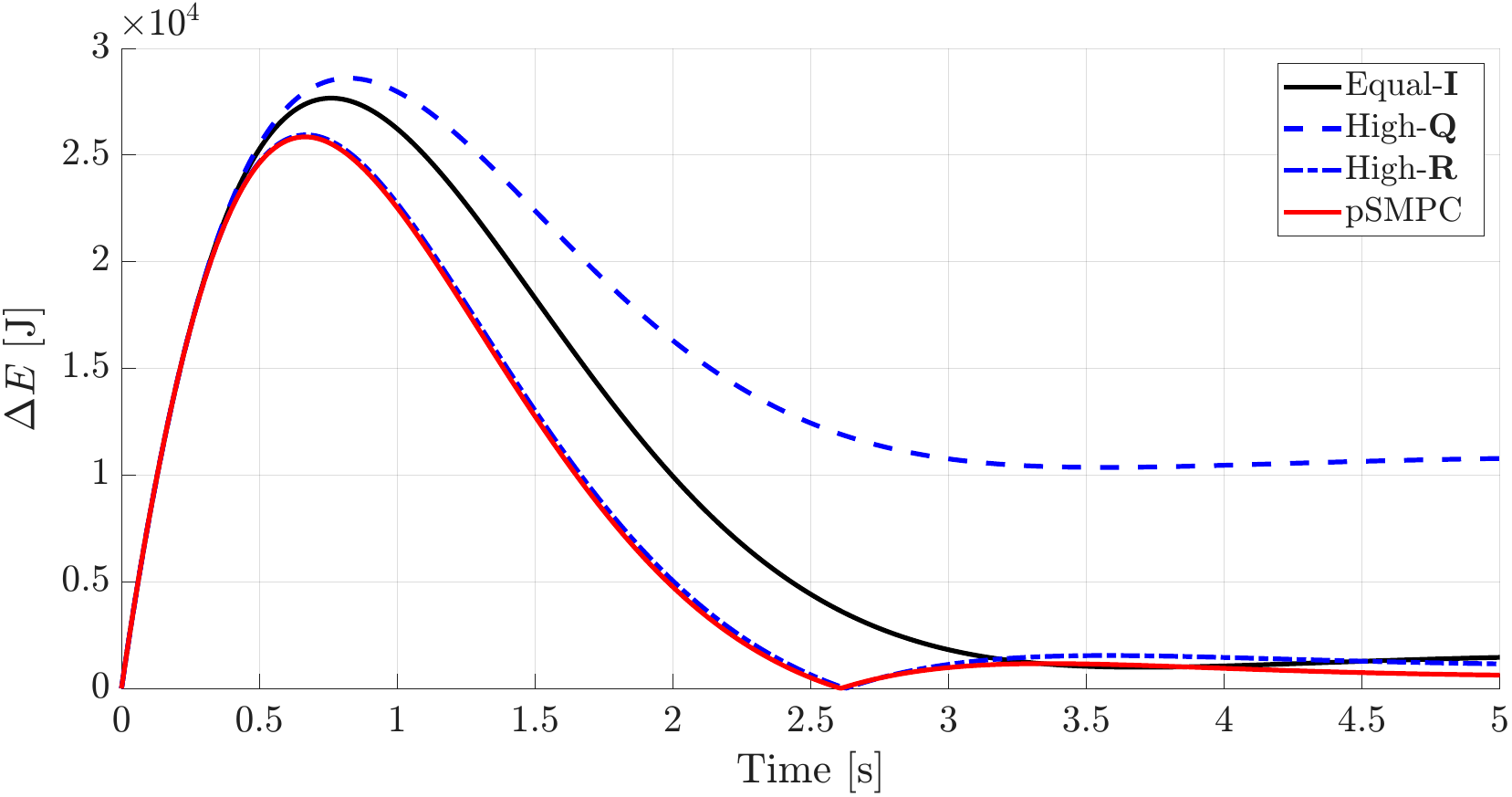}
        \vspace{-2em}\caption{Deviation of the energy over time for all four control schemes (nominal).}
        \label{fig:nominal_deltaE}
    \end{center}
\end{figure}

\begin{figure}[h]
    \begin{center}
        \includegraphics[width=1\linewidth]{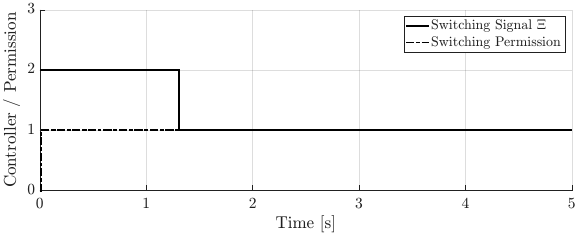}
        \vspace{-2em}\caption{Switching signal and permission (nominal).}
        \label{fig:nominal_switching}
    \end{center}
\end{figure}

The results in the robust case are very similar despite two additional switches between both controllers shown in \ref{fig:robust_switching}.

\begin{figure}[h!]
    \begin{center}
        \includegraphics[width=1\linewidth]{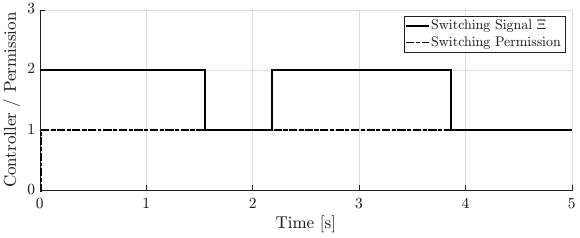}
        \vspace{-2em}\caption{Switching signal and permission (robust).}
        \label{fig:robust_switching}
    \end{center}
\end{figure}


\section{Conclusion}
This work addresses the problem of nonlinear control design under multiple objectives. We propose novel approaches for multi-objective MPC by introducing a parallel-Switched Model Predictive Control scheme with formal guarantees. Two frameworks are presented, imposing nominal and robust stability as well as recursive feasibility through switching conditions for a supervisory unit. A numerical example demonstrates the effectiveness of our approaches. Furthermore, we see connections to \textit{distributed} and \textit{network} control as well as multi-objective optimization. Future developments may go in the direction of exploiting potential synergies with other MPC and switching schemes. Furthermore, computing the invariant terminal set of the nominal pSMPC  is left to future work.

\end{document}